\newtheorem{thm}{Theorem}[section]
\newtheorem{cor}[thm]{Corollary}
\newtheorem{lem}[thm]{Lemma}
\newtheorem{prop}[thm]{Proposition}
\theoremstyle{definition}
\theoremstyle{remark}
\newtheorem{rem}[thm]{Remark}
\numberwithin{equation}{section}
\pgfmathsetmacro{\xdeg}{30}
\pgfmathsetmacro{\xx}{cos(\xdeg)}
\pgfmathsetmacro{\xy}{sin(\xdeg)}
\pgfmathsetmacro{\ydeg}{120}
\pgfmathsetmacro{\yx}{cos(\ydeg)}
\pgfmathsetmacro{\yy}{sin(\ydeg)}
\pgfmathsetmacro{\zdeg}{80}
\pgfmathsetmacro{\zx}{cos(\zdeg)}
\pgfmathsetmacro{\zy}{sin(\zdeg)}
\begin{document}
\title[Warped cones and measured coupling of groups]{From the coarse geometry of warped cones to the measured coupling of groups}
\author[Das]{Kajal Das}
\address{Indian Statistical Institute, Kolkata, India}
\email{kdas.math@gmail.com}

\maketitle
\textbf{Abstract:} In this article, we prove that if two warped cones corresponding to two finitely generated groups with free, isometric, measure-preserving, actions on two compact metric spaces with probability measures are level-wise quasi-isometric (with some extra natural assumptions), then the corresponding groups are uniformly measured equivalent (UME).  It was earlier known from the works of de Laat-Vigolo and Sawicki that if two such warped cones are level-wise quasi-isometric, then their stable products are quasi-isometric. We strengthen this result and go further to prove UME of the groups. We also discuss many applications of our main result. We give countably infinite examples of groups and associated Warped cones such that the groups are mutually quasi-isometric, but the Warped cones are not mutually quasi-isometric in the sense of our main theorem. We also provide examples of two Warped cones ( which are quasi-isometric  to two different expander families) such that one of them does not quasi-isometrically embed into the other one in the sense of our main theorem.

\hspace{2mm}

\textbf{Mathematics Subject Classification (2010):}  20F69, 20F65, 20L05, 22C05, 22E15, 22D55, 37A15, 37A20, 51F30.
 
\hspace{2mm}

\textbf{Key terms:}: Warped cones, Box spaces, Quasi-isometry, Measured equivalence, Uniformly measured equivalent, Expander graphs.

\section{Introduction}

Warped cones are geometric objects introduced by J. Roe and associated to a free action of a finitely generated group on a compact metric space \cite{Ro05}. This geometric object encodes the geometry of the Cayley graph of the group, the geometry of the manifold and the dynamics of the group (see Subsection \ref{wc-definition} for the definition of a Warped cone). This geometric object appears in the context of Coarse Baum-Connes conjecture and expander graphs. Building on the works of G. Yu, it can be shown that the warped cones associated with an amenable group provide examples of metric spaces which satisfy Coarse Baum-Connes conjecture (\cite{Yu00} ). On the other hand, using the works of Drinfeld-Margulis-Sullivan, we can provide examples of warped cones associated with Property (T) groups whose discretizations at each of the level sets  give examples of expander graphs (\cite{Mar80},\cite{LV19}). Moreover, there is an equivalence between analytic properties of the groups and geometric properties of the warped cones. The details of these results can be found in Subsection \ref{wc-example}.

However, there is a parallel connection of warped cones with Box spaces. A \textit{box space} is a geometric object associated with a finitely generated residually finite group. Let $(G,S)$ be a finitely generated residually finite group with a symmetric generating set $S$ and  $\{G_n\}_{n\in\mathbb{N}}$ be a decreasing sequence of finite index normal subgroups of $G$ whose intersection is trivial. We consider the Cayley graph structure on $G$  arising from the generating set $S$. We push-down this Cayley graph structure on $(G/G_n,\bar{S}_n)$,  where 
 $\bar{S}_n$ is the image of $S$ in $G/G_n$ under the quotient map.  The box space of $G$ w.r.t. $\{G_n\}_{n\in\mathbb{N}}$ is defined as the disjoint union of $\sqcup_{n\in\mathbb{N}}G/G_n$ and denoted by $\square_{G_n}G$. We can give a metric on a box space as follows: We consider the Cayley graph metric on each of $G/G_n$ and we assign a metric on the union such that the distance between two distinct copies $G/G_n$ and $G/G_{n+k}$  tends to infinity as $n\to \infty$. The $t$-th level of a Warped cone can be compared to the $n$-th level of a box space. It has been observed in Khukhro-Valette that if two box spaces are quasi-isometric, the the groups are quasi-isometric (\cite{KV17} ). Later, the author has generalized this result to uniform measured equivalence: if two box spaces  are quasi-isometric, then the groups are uniformly measured equivalent (\cite{Da18} ).  Motivated by the result of Khukhro-Valette, de Laat-Vigolo ( see \cite{LV19}) proves the following theorem for warped cones (independently obtained by D. Sawicki in \cite{Sa19}) :

\begin{thm}\label{coarseequi1}
Let $\Gamma$ and $\Lambda$ act freely and isometrically on compact Riemannian manifolds $(M, d^M)$ and $(N, d^N)$ , respectively.  Suppose $\mathcal{O}_\Gamma M$ and $\mathcal{O}_\Lambda N$ are the Warped cones associated to $(M, d_M)$ and $(N, d_N)$, respectively. Moreover, we assume that there exists a family $\{\Phi_t:(M_t\rightarrow N_t\}_{t\geq 0}$ of $(K, C)$-QI  between  $\mathcal{O}_\Gamma M$ and $\mathcal{O}_\Lambda N$, respectively, for some $K\geq 1$ and $C\geq 0$. Then, $\Gamma\times\mathbb{Z}^m$ and $\Lambda \times \mathbb{Z}^n$ are $(K, C)$-QI, where $m= dim M $ and $n= dim N$. 
\end{thm}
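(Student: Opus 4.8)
The plan is to reconstruct the two groups, together with their Euclidean factors, from the \emph{asymptotic local geometry} of the warped cone levels, and then to transport the given level-wise quasi-isometries to this limit. Write $M^t$ for the level set $M\times\{t\}$ equipped with the warped metric $\delta_t$, in which a path may either flow inside $M$ at cost $t\,d_M$ or jump $p\mapsto sp$ along a generator $s\in S$ at cost $1$. Fix a basepoint $x\in M$. The key claim is that, for every fixed radius $R$, the pointed ball $\bigl(B_{\delta_t}((x,t),R),(x,t)\bigr)$ converges in the pointed Gromov--Hausdorff sense, as $t\to\infty$, to the corresponding ball of the product $\Gamma\times\mathbb{R}^m$ (word metric on $\Gamma$, Euclidean metric on $\mathbb{R}^m=T_xM$), and symmetrically $N^t\to\Lambda\times\mathbb{R}^n$. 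Granting this, a compactness argument applied to the family $\{\Phi^t\}$ produces a limiting quasi-isometry between the two products, and since $\mathbb{R}^k$ is quasi-isometric to $\mathbb{Z}^k$ the theorem follows.

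First I would establish the local model. Reaching a point within $\delta_t$-distance $R$ of $(x,t)$ costs at most $R$ in jumps and at most $R$ in flow, so only orbit points $\gamma x$ with word length $|\gamma|_S\le R$ are visible, and around each of them one sees only the $M$-ball of radius $R/t$. Because the action is free and $M$ is compact, there is a uniform $\epsilon_0>0$ such that distinct orbit points $\gamma x$ lie at $M$-distance $\ge\epsilon_0$; hence for $t$ large their $t$-rescaled neighbourhoods are at $\delta_t$-distance $\gg R$ and cannot be joined by flow, only by jumps following the Cayley graph of $\Gamma$. On each neighbourhood the rescaled metric $t\,d_M$ on $B_M(\gamma x,R/t)$ is $(1+o(1))$-bi-Lipschitz to the Euclidean ball $B(0,R)\subset T_{\gamma x}M\cong\mathbb{R}^m$ via the exponential chart, the distortion tending to $0$ as $R/t\to0$ uniformly in $x$ by compactness. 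Assembling the pieces, $B_{\delta_t}((x,t),R)$ is $(1+o(1),o(1))$-quasi-isometric to $\{\gamma:|\gamma|_S\le R\}\times B(0,R)$ inside $\Gamma\times\mathbb{R}^m$, the $\mathbb{R}^m$-fibres over adjacent orbit points being glued by the orthogonal derivative maps $d\gamma$; as these are isometries of $\mathbb{R}^m$, the resulting twisted product is quasi-isometric to the straight product $\Gamma\times\mathbb{R}^m$ with uniform constants.

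With the local models in hand, I would extract the limit map. For each $t$ the quasi-isometry $\Phi^t$ carries $B_{\delta_t}((x,t),R)$ into a ball of radius $KR+C$ around $\Phi^t(x,t)$ in $N^t$; composing with the bi-Lipschitz identifications of the previous step turns $\Phi^t$ into a $(K,C)$-quasi-isometry between a ball of $\Gamma\times\mathbb{R}^m$ and a ball of $\Lambda\times\mathbb{R}^n$, up to an error $o(1)$. Normalising basepoints and invoking an Arzel\`a--Ascoli (equivalently, ultralimit) argument --- legitimate because the maps are uniformly coarse and the target balls of finite radius are totally bounded --- I would pass to a subsequential limit over $t\to\infty$ and then exhaust by $R\to\infty$ through a diagonal procedure. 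Since the defining quasi-isometry inequalities are closed conditions, the limit is a genuine $(K,C)$-quasi-isometry $\Phi:\Gamma\times\mathbb{R}^m\to\Lambda\times\mathbb{R}^n$. Finally, the inclusion of the lattice $\mathbb{Z}^m\hookrightarrow\mathbb{R}^m$ (and likewise $\mathbb{Z}^n\hookrightarrow\mathbb{R}^n$) is a quasi-dense isometric embedding, so composing yields a $(K,C)$-quasi-isometry $\Gamma\times\mathbb{Z}^m\to\Lambda\times\mathbb{Z}^n$, as required.

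The main obstacle is the uniformity underlying the local model of the second step. One must control, simultaneously for all basepoints $x\in M$ and all large $t$, (i) the separation of orbit points, which rests on freeness together with a compactness argument ruling out orbit points that approach each other, (ii) the Euclidean approximation of $t\,d_M$ on shrinking balls, with distortion uniform in $x$, and (iii) the bookkeeping of the orthogonal cocycle $\gamma\mapsto d\gamma$ that twists the $\mathbb{R}^m$-fibres, so as to confirm that the twisting does not affect the quasi-isometry type. Getting these uniformities strong enough to survive the double limit $t\to\infty$, $R\to\infty$ without degrading the constants $(K,C)$ is the delicate technical heart of the argument; everything else is soft compactness and the elementary equivalence $\mathbb{R}^k\sim_{QI}\mathbb{Z}^k$.
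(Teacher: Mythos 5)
Your proposal is correct in outline and follows essentially the same route as the proof the paper relies on: the paper itself does not prove Theorem \ref{coarseequi1} but imports it from de Laat--Vigolo \cite{LV19}, and their argument is precisely your two steps --- a local-model lemma identifying large-$t$ balls $B^{M^t}_{m_0}(r)$ with balls in $\Gamma\times\mathbb{R}^m$ (uniform orbit separation from freeness plus compactness, exponential charts with distortion $o(1)$, and straightening of the fibre identifications), which is exactly the statement the paper later invokes as Theorem 5.8 of \cite{LV19} in Proposition \ref{orbiteq}, followed by a diagonal/ultralimit extraction of a quasi-isometry $\Gamma\times\mathbb{R}^m\rightarrow\Lambda\times\mathbb{R}^n$ and the elementary equivalence of $\mathbb{R}^k$ with $\mathbb{Z}^k$. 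Two cosmetic caveats: your one-line claim that the $d\gamma$-twisted product is quasi-isometric to the straight product because the gluings are isometries is not a valid general principle, but it holds here because the cocycle is a coboundary (choosing the frame $d\gamma\cdot F_{m_0}$ at $\gamma m_0$ trivializes the bundle isometrically, the holonomy around relations being $d(\mathrm{id})=\mathrm{id}$ for a genuine action), and the final lattice projections degrade the additive constant $C$, a looseness already present in the theorem's stated constants.
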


Motivated by the above-mentioned  result of the author on Box space \cite{Da18}, P. Nowak and D. Sawicki asked the author whether we can prove an analogous result of Theorem \ref{coarseequi1} in the setting of uniform measured equivalent.  In this article, we affirmatively answer this question. 

\begin{thm}(Main Theorem)\label{mainthm}
Let $\Gamma$ and $\Lambda$ be two finitely generated  groups which are acting on two compact metric spaces with probability measures $(X, d^X, \mu^X)$ and $(Y, d^Y, \mu^Y)$, respectively,  by free,  isometric and probability measure preserving ergodic actions. Suppose, there exist a family $\{\Phi_t: (X_t, d^X_t)\rightarrow (Y^t, d^Y_t)\}_{t\geq 1}$ of Borel maps and constants $K\geq 1$, $C\geq 0$ satisfying the following properties for all $t$: 
\begin{itemize}
\item[1.] $\Phi_t$ is a $(K,C)$-quasi-isometric embedding, i.e., 
$$ \frac{d^X_t(x,x')}{K}-C\leq d^Y_t\big(\Phi_t(x),\Phi_t(x')\big)\leq K d^X_t(x,x')+C ;$$
\item[2.] $\Phi^t$ maps the $\Gamma$-orbits of $X_t$ to the $\Lambda$-orbits of $Y_t$;
\item[3.] $\Phi^t$ is $C$-onto along the orbits, i.e., for all $x\in X_t$ and $\lambda\in\Lambda$ there exists $\gamma\in \Gamma$ such that $d^Y_t\big(\Phi_t(\gamma x),\lambda \Phi_t(x)\big)\leq C$;
\item[4.] $\Phi^t$ is $K$-measure preserving, i.e., for all Borel sets $A$ of $Y_t$
$$\frac{\mu^Y(A)}{K}\leq \mu^X\big((\Phi_t)^{-1}(A)\big)\leq K \mu^Y(A).$$
\end{itemize}

Then,
\begin{itemize}
\item[(A)] the groups $\Gamma$ and $\Lambda$ are quasi-isometric;
\item[(B)] the groups $\Gamma$ and $\Lambda$ are uniformly measured equivalent. 
\end{itemize}

\end{thm}

We remark that all four assumptions on the family of quasi-isometries $\Phi^t$ is very natural in the category of warped cones. In case of `box spaces', a $(K,C)$-quasi-isometry map between $n$th level of two box spaces is automatically Borel. Since a group acts transitively on its quotient groups,  a quasi-isometry map between $n$th level of two box spaces preserves the orbits and it is $C$-onto along the orbits. Finally, any $(K,C)$-quasi-isometry satisfies the fourth assumption of our main theorem.

\subsection{Acknowledgements}

I would like to thank Prof. Piotr Nowak and Dr. Damian Sawicki for asking me this question (Theorem \ref{mainthm}). I would like to thank Prof. Romain Tessera for numerous helpful suggestions. This project has received funding from the European Research Council (ERC) under the European Union's Horizon 2020 research and innovation programme (grant agreement no. 677120-INDEX) ( provided by Prof. Piotr Nowak) and from National Board for Higher Mathematics, India.

\subsection{Organization}
In Section 2, we introduce our necessary definitions, notations and abbreviations. In Section 3, we construct quasi-isometries between the groups. In Section 4, we prove our main theorem, i.e., we construct a measured coupling space between the groups. In Section 5 we discuss many applications of our main theorem.

\subsection{Warped Cones}\label{wc-definition}

Let $X$ be a compact metric space with metric $d$, and for every $t \geq 1$. Consider the Euclidean cone, Cone($X$)  over $X$, which is defined as the union of $\{X_t\}_{t\geq 1}$, where each $t$-th level $X_t$ is the metric space $(X,d)$. 
Given an action : $\Gamma\curvearrowright (X,d)$ by homeomorphisms, we define the $t$-level of the Warped cone as the metric space $(X, d^X_t )$ where $d^X_t$ is the warped distance, i.e. the largest metric satisfying

\begin{itemize}
\item[1.] $d^X_t(x,y)\leq t\hspace{1mm}d(x,y)$ for all $x, y\in X$, 
\item[2.] $d^X_t(x,\gamma\cdot x)\leq |\gamma|$  for all $x\in X$ and $\gamma\in\Gamma$,  
\end{itemize}
where $|\gamma|$ denotes the word-metric with respect to a generating subset $S$ of $\Gamma$. 
Note that the definition depends on the choice of the generating set $S$. However, the coarse structure induced by the warped metric does not depend on the generating sets. We denote the Warped cone by $\mathcal{O}_\Gamma X$ and $t$-th level of the Warped cone by $(X_t, d^X_t)$.  The Warped distance between two points $x,y\in (X_t, d^X_t)$ can be expressed as follows:

$$d^X_t(x,y)=\mbox{inf}_{\gamma\in\Gamma}\hspace{1mm}[t\hspace{1mm}d(x,\gamma y) + |\gamma|].$$


\begin{rem}
 Since Coarse Baum-Connes  conjecture  is concerned about bounded geometry metric spaces and the isometric action of groups give rise to bounded geometry warped cones  (see Proposition 1.10, \cite{Ro05} ), in this article we mostly consider  warped cones with isometric  group action. 
 \end{rem}

\subsection{Some interesting examples of warped cones}\label{wc-example}

\begin{itemize}
\item[1.]   Let $Y$ be a compact manifold (or finite simplicial complex)
and let $\Gamma$ be an amenable group acting by Lipschitz homeomorphisms on $Y$ .
Then the warped cone $\mathcal{O}_\Gamma(Y )$ has property A of G. Yu (Cor. 3.2 \cite{Ro05}) . 
Conversely, suppose that the warped cone $\mathcal{O}_\Gamma(G)$,
has property A, where $\Gamma$ is a dense subgroup of compact Lie group $G$.  Then $\Gamma$ is amenable (\cite{Ro05}).
 For a generalized version of this result,  see \cite{SW21}. 

\item[2.] Suppose that the warped cone $\mathcal{O}_\Gamma(G)$, as defined above, is
uniformly embeddable in Hilbert space. Then $\Gamma$ has the Haagerup property .
In particular, if $\Gamma$ has property T, then $\mathcal{O}_\Gamma(G)$ cannot be uniformly embedded
in Hilbert space (\cite{Ro05}).  For a generalized version of this result,  see \cite{SW21}. 

\item[3.]   Some expander graphs can be constructed using warped cone. Sullivan and Margulis prove that $SO(n, \mathbb{Z}[1/5])$ ($n > 4$)  is a Property (T) group embedded densely inside the compact Lie group $SO(n)$. We consider the warped cone associated with the natural action of $SO(n, \mathbb{Z}[1/5])$ on $SO(n)$ ( with a bi-invariant metric and the Haar measure). If we take graph-approximations of each $t$-th level of the Warped cone, this sequence of graphs will be an expander sequence, more particularly, the associated warped cone will be uniformly quasi-isometric to an expander sequence (see  \cite{LV19} for more details). On the other hand, 
if $a,b\in SO(3,\bar{\mathbb{Q}})$ are two matrices with algebraic coefficients and generate a non-abelian free group $F_2$, then their action by rotations on $\mathbb{S}^2$ has spectral gap (due to Bourgain and Gamburd). It follows from \cite{Vi19} that the level sets of the Warped cone $\mathcal{O}_{{F_2}}(\mathbb{S}^2)$ are uniformly quasi-isometric to an expander family. 

\end{itemize}

\section{Preliminaries: some definitions, notations and abreviations:}

\subsection{Quasi-isometry}\label{qi} Let $X$ and $Y$ be two metric spaces. A map $f: X\rightarrow Y$ is said 
to be a $(K,C)$-quasi-isometry, where $K\geq 1, C\geq 0$,  if the following conditions are satisfied: 

\begin{itemize}
\item $\frac{d_X(x_1,x_2)}{K}-C\leq d_Y\big(f(x_1),f(x_2)\big)\leq K d_X(x_1,x_2)+C$ for all $x_1, x_2\in X$ ;
\item the image $f(X)$ is $C$-dense in $Y$, i.e., for any $y\in Y$ there exists a $x\in X$ such that $y$ is inside the $C$-radius ball of $f(x)$. 
\end{itemize}

\subsection{Measured Equivalence and Uniform Measured Equivalence}\label{me} In \cite{Gr93} (p. 6), Gromov first formulates a topological criterion for quasi-isometry and introduces measured equivalence as a measure theoretic counterpart of quasi-isometry. Two countable discrete groups $\Gamma$ and $\Lambda$ are called \textit{Measured Equivalent}(ME) if they have commuting measure preserving free actions on a Borel space $(X, \mu)$ with finite measure Borel fundamental domains, say $X_\Gamma$ and $X_\Lambda$, respectively. The space $(X, \mu)$ is called a `measured coupling space'  for the groups $\Gamma$ and $\Lambda$.

\textit{Uniform Measured Equivalence}(UME) is a sub-equivalence relation of `Measure Equivalence' on finitely generated groups introduced by Shalom in \cite{Sh04}.  If, in a measured equivalence relation, the action of an element of one group, say $\Gamma$, on the fundamental domain of another group, say $X_\Lambda$, is covered by finitely many $\Lambda$-translates of  $X_\Lambda$, then these two groups are called UME.

 \bigskip

\section{ QI of the groups and Proof of Part (a) of our main theorem}

Suppose we are in the set up of Theorem \ref{mainthm}. We define $A^X_t(R)=\{x\in X | d^X_t(x,\gamma x)= |\gamma| \forall \gamma\in B^\Gamma_1(R) \}$ and $A^Y_t(R)=\{y\in Y | d^Y_t(y,\lambda y)= |\lambda| \forall \lambda\in B^\Lambda_1(R) \}$for $R\geq 0$ and $t\geq 1$. 

\begin{lem}\label{increasingsets}
$A^X_{t_1}(R)\subseteq A^X_{t_2}(R)$  and $A^Y_{t_1}(R) \subseteq A^Y_{t_2}(R)$  if $t_1\leq t_2$ for all $R\geq 0$. 
\end{lem}
\begin{proof}
  Since $d^X_{t_1}(x,\gamma x)\leq d^X_{t_2}(x,\gamma x)$ for $t_1\leq t_2$ , we have $A^X_{t_1}(R)\subseteq A_{t_2}(R)$.  Similarly, we can prove that $A^Y_{t_1}(R) \subseteq A^Y_{t_2}(R)$. 
\end{proof}

\begin{lem}\label{unioniswhole}
Fix $R\geq 0$,  $x\in X$ and $y\in Y$. Then, there exists $t(R,x)\geq 1$ such that $d^X_t(x,\gamma x)=|\gamma |$  for all $t>t(R,x)$ and for all $\gamma\in B^\Gamma_1(R)$, and there exists $t(R,y)$ such that $d^Y_t(x,\lambda x)=|\lambda |$  for all $t> t(R,y)$ and for all $\lambda\in B^\Lambda_1(R)$. In particular, $\cup_{t\geq1} A^X_t(R)= X$ and $\cup_{t\geq1} A^Y_t(R)= Y$ for all $R\geq 0$. 
\end{lem}
\begin{proof}

We fix $R\geq 0$, $x\in X$, and  $\gamma\in B^\Gamma_1(R)$. By definition, 

$$d^X_t(x,\gamma x)=\mbox{inf}\{td^X(x,\gamma' \gamma x)+|\gamma'| : \gamma'\in \Gamma\}.$$

Since the infimum in the above definition of $d^X_t(x,\gamma x)$ is a minimum, we obtain that there exists  $\gamma_t \in \Gamma$ such that $d^X_t(x,\gamma x)=td^X(x,\gamma_t \gamma x)+|\gamma_t| $, which implies that $|\gamma_t| \leq |\gamma |$.  If $\gamma_t\neq \gamma^{-1}$, we obtain that  $d^X(x,\gamma_t \gamma x)\geq \epsilon >0$, where 
$$\epsilon=\mbox{inf}\{d^X(x,\gamma' \gamma x): \gamma'\neq \gamma^{-1}, |\gamma'| \leq R\}.$$
It implies that $t \leq \frac{|\gamma |}{\epsilon}\leq \frac{R}{\epsilon}$. Therefore, $\gamma_t=\gamma^{-1}$ for $t> t(R,x)$ where $t(R,x)=\frac{R}{\epsilon}$.

Similarly, we can prove the result for $Y$. 

\end{proof}


We define 

$$D_t(R)=[\cap_{\gamma\in B_1^\Gamma(R)}\gamma A^X_t(R)]\cap[\cap_{\lambda\in B_1^\Lambda} \Phi_t^{-1}(\lambda A^Y_t(KR+C))]$$

For all $x\in X$ and $t\geq 1$, we define $\alpha_t(\cdot,x):\Gamma\rightarrow\Lambda$ as a map satisfying 
the following equality:

$$\Phi_t(\gamma x)=\alpha_t(\gamma,x) \Phi_t(x)$$ 

for all $t\geq 1$, $\gamma\in\Gamma$ and $x\in X$. 

\begin{lem}\label{alphatcocycle}
For all $t\geq 0$, the map $\alpha_t:\Gamma\times X\rightarrow \Lambda$ is a cocycle. 
\end{lem}
\begin{proof} Let $\gamma_1,\gamma_2\in\Gamma$ and $x\in X$. 
From the definition of $\alpha_t$, we obtain that $\Phi_t(\gamma_1\gamma_2 x)=\alpha_t(\gamma_1\gamma_2,x) \Phi_t(x)$. On the other hand, 
$$\Phi_t\big(\gamma_1(\gamma_2 x)\big)=\alpha_t(\gamma_1,\gamma_2 x)\Phi_t(\gamma_2 x)=\alpha_t(\gamma_1,\gamma_2 x)\alpha_t(\gamma_2,x)\Phi_t(x).$$ Since $\Lambda$-action on $Y$ is free, we obtain that $\alpha_t(\gamma_1\gamma_2,x)=\alpha_t(\gamma_1,\gamma_2 x)\alpha_t(\gamma_2,x)$. 
\end{proof}

\begin{lem}\label{alphatlocqi}
For $x\in D_t(R)$, the cocyle $\alpha_t( \cdot, x)|_{B_1^\Gamma(R)}$ defines a $(K,C)$-Quasi-isometry from $B_1^\Gamma(R)$ to $B_1^\Gamma(KR+C)$. 
\end{lem}
\begin{proof}
This lemma follows from the definition of $D_t(R)$. 
\end{proof}

\begin{lem}\label{measdtrtend1}
For all $R, N\in \mathbb{N}$, there exists $t(R,N)\geq 0$  such that $\mu^X(D_t(R))\geq (1-\frac{1}{N})$ for all $t\geq t(R,N)$. 
\end{lem}
\begin{proof} We fix $R\geq 0$. From Lemma \ref{increasingsets} and Lemma \ref{unioniswhole}, we obtain that $\mu^X\big(A^X_t(R)\big)$ monotonically increases and tends to 1 as $t\rightarrow\infty$. Since $\Gamma$-action preserves the measure $\mu^X$, we obtain that $\mu^X\big(\gamma A^X_t(R)\big)$ monotonically increases and tends to 1 as $t\rightarrow\infty$ for all $\gamma\in B_1^\Gamma(R)$. Similarly, $\mu^Y\big(\lambda A^Y_t(KR+C)\big)$ monotonically increases and tends to 1 as $t\rightarrow\infty$ for all $\lambda\in B_1^\Lambda(KR+C)$, which implies that $\mu^Y[Y-\big(\lambda A^Y_t(KR+C)\big)]$ monotonically decreases and tends to 0 as $t\rightarrow\infty$ for all $\lambda\in B_1^\Lambda(KR+C)$. From condition (4) of our main theorem, we obtain that 
$$\mu^X[\Phi_t^{-1}\{Y-\big(\lambda A^Y_t(KR+C)\big)\}]\leq K \mu^Y[Y-\big(\lambda A^Y_t(KR+C)\big)].$$
Therefore, $\mu^X[\Phi_t^{-1}\big(\lambda A^Y_t(KR+C)\big)]$ monotonically increases and tends to 1 as $t\rightarrow\infty$ for all $\lambda\in B_1^\Lambda(KR+C)$. Since, 

$$D_t(R)=[\cap_{\gamma\in B_1^\Gamma(R)}\gamma A^X_t(R)]\cap[\cap_{\lambda\in B_1^\Lambda} \Phi_t^{-1}(\lambda A^Y_t(KR+C))],$$

$\mu^X\big(D_t(R)\big)$ monotonically increases and tends to 1 as $t\rightarrow\infty$. Hence, we have our lemma.

\end{proof}

We define $$D=\cap_{R=0}^\infty \cup_{N\geq 0} D_{t(N,R)}(R).$$

It is easy to observe that $\mu^X(D)=1$. We fix an ultra-filter $\mathcal{U}$ over the set $T=\{t(R,N)\}_{R,N\in\mathbb{N}_0}$. Define $\alpha(\cdot,x)=lim_{t\rightarrow \mathcal{U}} \alpha_t(\cdot,x):\Gamma\rightarrow\Lambda$. for all $x\in D$. 

\begin{prop}\label{alphaqi}
$\alpha(\cdot,x):\Gamma\rightarrow\Lambda$ is a $(K,C)$-Quasi-isometry for all $x\in D$. In particular, it proves part (A) of our main theorem.
\end{prop}
\begin{proof}
We fix $\gamma_1$ and $\gamma_2$ in $\Gamma$ and $x\in D$. We define
$$A=\{t\in T : \alpha_t(\gamma_1,x)=\alpha(\gamma_1,x), \alpha_t(\gamma_2,x)=\alpha(\gamma_2,x)\}.$$
 Now, for all $t\in A$, 
$$ \frac{|\gamma_1\gamma_2^{-1}|}{K}-C\leq |\alpha_t(\gamma_1,x)(\alpha_t(\gamma_2,x))^{-1} |\leq K |\gamma_1\gamma_2^{-1}|+C .$$
Therefore, 
$$ \frac{|\gamma_1\gamma_2^{-1}|}{K}-C\leq |\alpha(\gamma_1,x)(\alpha(\gamma_2,x))^{-1} |\leq K |\gamma_1\gamma_2^{-1}|+C .$$
Now, we fix $\lambda_0\in\Lambda$ and $x\in D$. From condition (3) of our main theorem, we obtain that for all $t\geq 1$ there exists $\gamma_t\in\Gamma$ such that 

$$|\alpha(\gamma_t,x)\lambda_0^{-1}|\leq C .$$

Since $\{\gamma_t\}_{t\geq 1}$ is a bounded set in $\Gamma$, there exists, a subset $B$ of $T$ in the ultrafilter $\mathcal{U}$ and an element $\gamma_0\in\Gamma$ such that $\gamma_t=\gamma_0$ for all $t\in B$. Therefore, 
$$|\alpha(\gamma_0,x)\lambda_0^{-1}|\leq C .$$ 
Hence, $\alpha(\cdot,x):\Gamma\rightarrow\Lambda$ is a $(K,C)$-Quasi-isometry for all $x\in D$.

\end{proof}

Let $X'=\cap_{\gamma\in\Gamma} \gamma D$. We observe that $X'$ is a Borel subset of $X$ closed under $\Gamma$-action with measure 1. We end this subsection by proving that $\alpha(\cdot,x)$ is not only a quasi-isometry, the restriction of $\alpha$ on $\Gamma\times X'$ is a cocycle. This will be required in the next subsection.

\begin{prop}\label{alphacocycle}
$\alpha: \Gamma\times X' \rightarrow \Lambda $ is a cocyle.
\end{prop}
\begin{proof}
We need to prove $\alpha(\gamma_1\gamma_2, x)=\alpha(\gamma_1,\gamma_2\cdot x)\alpha(\gamma_2,x)$ for all $\gamma_1,\gamma_2\in \Gamma$ and $x\in X$.  We fix $\gamma_1,\gamma_2\in \Gamma$ and $x\in X$. We observe that $$A=\{t\in T : \alpha_t(\gamma_1\gamma_2, x)=\alpha(\gamma_1\gamma_2,x)\hspace{1mm}\mbox{and}\hspace{1mm} \alpha_t(\gamma_2, x)=\alpha(\gamma_2,x) \}$$

belongs to the ultrafilter $\mathcal{U}$. 

Now, we have 
$$\alpha(\gamma_1\gamma_2,x) \big(\alpha(\gamma_2,x)\big)^{-1}=\alpha_t(\gamma_1\gamma_2,x) \big(\alpha_t(\gamma_2,x)\big)^{-1}=\alpha_t(\gamma,\gamma_2\cdot x)$$
 for all $t\in A$. Therefore, $\alpha(\gamma_1\gamma_2,x) \big(\alpha(\gamma_2,x)\big)^{-1}=\alpha(\gamma_1,\gamma_2\cdot x)$, which is equivalent to saying that 
 $$\alpha(\gamma_1\gamma_2, x)=\alpha(\gamma_1,\gamma_2\cdot x)\alpha(\gamma_2,x). $$

\end{proof}

\section{Construction of measured coupling and proof of part (b) of our main theorem}

Let $Z=X'\times\Lambda$. We define $\Gamma$-action and $\Lambda$-action on $Z$ as follows:

$$\gamma\cdot (x,\lambda')=(\gamma x,\alpha(\gamma,x)\lambda')$$

for all $\gamma\in\Gamma$, $x\in X'$ and $\lambda'\in\Lambda$, and 

$$\lambda \cdot (x,\lambda')=(x,\lambda'\lambda^{-1})$$

for all $x\in X'$ and $\lambda,\lambda'\in\Lambda$. 

We consider the restriction of the measure $\mu^X$ to $X'$ and tessellate it on whole $Z$ by $\Lambda$-action. We denote this measure on $Z$ by $\rho$. 

\vspace{5mm}

\textbf{Proof of part (B) of our main theorem:} It is easy to verify that 
\begin{itemize}
\item[(i)] $\Gamma$-action and $\Lambda$-action on $Z$ commute;
\item[(ii)] The measure $\rho$ is invariant under the action of $\Gamma$ and $\Lambda$. 
\end{itemize}
Clearly, $Z_\Lambda=X'\times \{1\}$ is a Borel fundamental domain of $\Lambda$ with measure 1. Now, we will show the existence of a Borel fundamental of $\Gamma$ with finite measure. Since $\alpha(\cdot,x):\Gamma\rightarrow \Lambda$ is $C$-onto for all $x\in X'$, there exists a finite subset $F$ of $\Lambda$ such that $\big(\alpha(\cdot,x)(\Gamma)\big)^{-1} F=\Lambda$ for all $x\in X'$. Let $K=X'\times F$. Now, we show that 
$\Gamma K=Z$. We fix $(x',\lambda)\in Z$. By the previous argument, there exists $\gamma\in\Gamma$ and $c\in F$ such that $\big(\alpha(\gamma^{-1},x')\big)^{-1} c=\lambda$. It is easy to verify that 
$$\gamma\cdot (\gamma^{-1}x',c)=(x',\lambda).$$
Therefore, $\Gamma K=Z$. It is easy to see that  $\rho(K)$ is finite. Therefore, there exists a Borel fundamental domain of $\Gamma$ inside $K$, say $Z_\Gamma$, with $\rho(Z_\Gamma)$ finite. Therefore, we obtain that $Z$ is a coupling space for $\Gamma$ and $\Lambda$, hence, $\Gamma$ and $\Lambda$ are measured equivalent.

Now, we prove that $\Gamma$ and $\Lambda$ are uniformly measured equivalent. We fix $\gamma\in\Gamma$ and consider the set $\gamma\cdot Z_\Lambda=\gamma\cdot [x'\times \{1\}]$. Since $\alpha_(\cdot,x):\Gamma\rightarrow \Lambda$ is a $(K,C)$-quasi-isometry for all $x\in X'$, $\alpha(\gamma,x)$ takes only finitely many values in $\Lambda$. Therefore, $\gamma\cdot Z_\Lambda$ can be covered by finitely many $\Lambda$-translates of $Z_\Lambda$. Now, we fix $\lambda\in\Lambda$ and consider the set $\lambda\cdot Z_\Gamma$.
For any element $(x,\lambda')\in Z_\Gamma$ , $\lambda\cdot (x,\lambda')=(x,\lambda'\lambda^{-1})$. Let $\gamma$ be the unique element such that after multiplying $(x,\lambda'\lambda^{-1})$ by $\gamma$, it gets translated back to $Z_\Gamma$.  Since 
$$\gamma\cdot\big(\lambda\cdot (x,\lambda')\big)=\big(\gamma x, \alpha(\gamma,x)\lambda'\lambda^{-1}\big),$$
 $\alpha(\gamma,x)\lambda'\lambda^{-1}\in F$, which implies that $\alpha(\gamma,x)\in F\lambda F^{-1}$. Since $F\lambda F^{-1}$ is a bounded set in $\Lambda$, there exists only finitely many $\gamma$ satisfying 
 $\alpha(\gamma,x)\in F\lambda F^{-1}$ for all $x\in X'$. Therefore, $\lambda\cdot Z_\Gamma$ can be covered by finitely many $\Gamma$-translates of $Z_\Gamma$. Hence, $\Gamma$ and $\Lambda$ are uniformly measured equivalent. \hfill\(\Box\)

\begin{rem}\label{qiembed}
 In our main theorem, if there exists a family $\{\Phi_t: (X_t, d^X_t)\rightarrow (Y^t, d^Y_t)\}_{t\geq 1}$ of Borel maps and constants $K\geq 1$, $C\geq 0$ satisfying only conditions (1), (2) and the following one:  
 for all Borel sets $A$ of $Y_t$
$$ \mu^X\big((\Phi_t)^{-1}(A)\big)\leq K \mu^Y(A),$$
 
 then 
 \begin{itemize}
 \item[(A)] there exists a $(K,C)$-quasi-isometric embedding from $\Gamma$ into $\Lambda$;
 \item[(B)] there exists a UME-embedding from $\Gamma$ into $\Lambda$. 
 \end{itemize}
 
 We can follow the same strategy taken in the proof of Theorem \ref{mainthm} to prove these two results. 

\end{rem}

\section{Applications}

\subsection{Distinguishing Warped cones up to quasi-isometry and quasi-isometric embedding:}

We are now ready to discuss the questions about quasi-isometry and quasi-isometric embedding among the Warped cones. 

\bigskip 

 \textbf{I. Groups with Property T and  groups with Haagerup Property:} Since  Property (T) and Haagerup property are invariant under measure equivalence, these properties are also invariant under quasi-isometry between Warped cones satisfying the properties in Theorem \ref{mainthm}. By Remark \ref{qiembed}, if there exists a quasi-isometric embedding of one Warped cone into another Warped cone, there exists a UME-embedding of the first group into the second group. But, an infinite group with Property (T) can not have UME-embedding into a Group with Haagerup property (for the proof, see Corollary 1.2 (i)). Therefore, we obtain the following corollary. 
 
 \begin{cor}
 There does not exist a quasi-isometric embedding from the Warped cone $\mathcal{O}_{SO(n,\mathbb{Z}[1/5])}SO(n,\mathbb{R})$ into $\mathcal{O}_{F_2}(\mathbb{S}^2)$ in the sense of Remark \ref{qiembed}.
 \end{cor}

\bigskip

\textbf{II. Lattices of $SL_n(\mathbb{R})$ ($n\geq 3$) and hyperbolic groups:} 
 
By \cite{BFGM07} (p. 4), the lattices in $SL_n(\mathbb{R})$ ($n\geq 3$) have Property $F_{L^p}$, where $1<p<\infty$, i.e., any affine isometric action of these groups on an $L^p$-space  ($1<p<\infty$) has a fixed point. On the other hand, by Theorem 1.1 of \cite{Yu05}, hyperbolic groups are a-$L^p$-menable, i.e., they admit a metrically proper affine isometric action on an $l^p$-space for some $2\leq p <\infty$. But, there does not exist a UME-embedding from an infinite group with Property $F_{L^p}$ into an a-$L^p$-menable group (for the proof, see Corollary 1.2 (i)). Therefore, we obtain the following corollary. 

\begin{cor}
There does not exist a quasi-isometric embedding from a Warped cone associated to a lattice in $SL_n(\mathbb{R})$ ($n\geq 3$) into a Warped cone associated to a lattice in  $Sp(n,1)$ ($n\geq 2$) in the sense of Remark \ref{qiembed}.
\end{cor}

\bigskip

\textbf{III. $SL_n(\mathbb{Z})$ and $SL_m(\mathbb{Z})$, where $n>m$ and $n,m\geq 3$:}
 
From Theorem 1.5 of \cite{Sh04}, if there exists a UME-embedding from a group $\Gamma$ into a group $\Lambda$, then  $cd_R \Gamma \leq cd_R \Lambda$, where  $cd_R \Gamma$ and $cd_R \Lambda$ denote the cohomological dimensions of $\Gamma$ and $\Lambda$  with coefficients in a ring $R$, respectively. 
 By a result of Borel-Serre, we get explicit values of cohomological dimension of $SL_n(\mathbb{Z})$
 with coeffcients in the ring $\mathbb{Q}$ : $cd_\mathbb{Q} SL_n(\mathbb{Z})=dim N$, where $N$ is set the upper triangular unipotent matrices which appears in the Iwasawa decomposition of $SL_n(\mathbb{R})$.
 Therefore, using Remark \ref{qiembed}, we obtain the following corollary: 
 
 \begin{cor}
 There exists no quasi-isometric embedding of a Warped cone associated with  $SL_n(\mathbb{Z})$ into a Warped cone associated with $SL_m(\mathbb{Z})$, where $n>m$, in the sense of Remark \ref{qiembed}.  
 \end{cor}

\subsection{Examples of not quasi-isometric Warped cones of quasi-isometric groups:}

In this subsection, we show a countable class of groups and associated Warped cones such that two associated Warped cones are not mutually quasi-isometric in the sense of Theorem \ref{mainthm}, but the groups are mutually quasi-isometric. We consider the following class of finitely generated residually finite groups $\Gamma_{p,q,r}:=(F_p\times F_q)\ast F_r$, where $F_p$, $F_q$ and $F_r$ are 
free groups with $p$, $q$ and $r$ generators, respectively, and $p, q, r\geq 2$. Let $X_{p,q,r}$ be the profinite completion of $\Gamma_{p,q,r}$. Let $\mathcal{O}_{\Gamma_{p,q,r}}X_{p,q,r}$ denote the Warped cone associated with the natural action of $\Gamma_{p,q,r}$ on $X_{p,q,r}$.

\begin{cor}\label{notqiwc}
There exists an infinite subclass of $\{\mathcal{O}_{\Gamma_{p,q,r}}X_{p,q,r} : (p,q,r)\in\mathbb{N}\times\mathbb{N}\times\mathbb{N} \}$ such that any two Warped cones of the subclass are not mutually quasi-isometric in the sense of Theorem \ref{mainthm}, but the corresponding groups are mutually quasi-isometric. 
\end{cor}

\begin{proof}

Since all free groups with at least two generators are commensurable, the groups $\Gamma_{p,q,r}$ for different $(p,q,r)$ are quasi-isometric. Using Properties 1.5 and Example 1.6 of \cite{Ga02} (p. 12, 13), we compute the first and second $l^2$-betti numbers of this group:
$\beta_1^{(2)}[(F_p\times F_q)\ast F_r]=r$ and  $\beta_2^{(2)}[(F_p\times F_q)\ast F_r]=(p-1)(q-1)$. From a deep of result of \cite{Ga02}, we obtain that if two groups are measured equivalent, the $l^2$-betti numbers of 
the groups must be proportional.  Therefore, we can obtain an infinite subclass of groups from 
$\{\Gamma_{p,q,r} | (p,q,r)\in\mathbb{N}\times\mathbb{N}\times\mathbb{N} \}$ which are not mutually measured equivalent. Now, using Theorem \ref{mainthm}, there exists an infinite subclass of $\{\mathcal{O}_{\Gamma_{p,q,r}}X_{p,q,r} : (p,q,r)\in\mathbb{N}\times\mathbb{N}\times\mathbb{N} \}$  such that any two Warped cones of the subclass are not mutually quasi-isometric in the sense of Theorem \ref{mainthm}, but the corresponding groups are mutually quasi-isometric. 
\end{proof}

\begin{rem}
The above corollary shows that Theorem \ref{mainthm} is stronger than Theorem \ref{coarseequi1}, i.e., there are examples of groups whose Warped cones can not be distinguished up to quasi-isometry by \ref{coarseequi1} but can be distinguished by \ref{mainthm}. 
\end{rem}


\begin{thebibliography}{KM98b}



\bibitem[1]{BFGM07} U Bader, A Furman, T Gelander, N Monod, Property (T) and rigidity for actions on
Banach spaces. Acta Mathematica, March 2007, Volume 198, Issue 1, pp 57-105.






\bibitem[2]{Da18} K. Das, From the geometry of box spaces to the geometry and measured couplings of the groups. K Das.  Journal of Topology and Analysis. Vol. 10, No. 02, pp. 401- 420 (2018). 



\bibitem[3]{Ga02} D. Gaboriau, Invariants $l^2$ de relations d'\'{e}quivalence et de groupes, Publ. Math. Inst. Hautes Etudes Sci. 95 (2002),
93-150.

\bibitem[4]{Gr93} M. Gromov, Asymptotic invariants of infinite groups. In G. Niblo and M. Roller (Eds.), Geometric group theory II, number 182 in  LMS lecture notes. Camb. Univ. Press, 1993.



\bibitem[5]{KV17} A. Khukhro and A. Valette, Expanders and box spaces. Adv.  Math. 314 (2017), 806-834. 


\bibitem[6]{LV19} T. de Laat and F. Vigolo, Superexpanders from group actions on compact manifolds, 
Geom. Dedicata 200 (2019), 287-302.

\bibitem[7]{Mar80} G.A. Margulis, Some remarks on invariant means. Monatsh. Math., 90 (1980), 233-235 .

\bibitem[8]{Mar88} G. Margulis, Explicit group theoretic constructions of combinatorial schemes and their applications in the construction of  expanders and concentrators, Problems Inform. Transmission 24, 1988.


\bibitem[9]{Ro05} J. Roe, Warped cones and property A. Geom. Topol., 9 (2005), 163-178.

\bibitem[10]{Sa19} D. Sawicki, Warped cones, (non-)rigidity, and piecewise properties. Proc. Lond. Math. Soc. (3) 118 (2019), no. 4, 753-786

\bibitem[11]{Sh04} Y. Shalom, Harmonic analysis, cohomology, and the large scale geometry of amenable groups.
Acta Mathematica 193 (2004), 119--185.

\bibitem[12]{SW21}  D. Sawicki and J. Wu, Straightening warped cones.  Journal of Topology and Analysis. Vol. 13, No. 04, pp. 933-957 (2021). 



\bibitem[13]{Vi19} F. Vigolo, Measure expanding actions, expanders and warped cones. Trans. Amer. Math. Soc. 371 (2019), no. 3, 1951-1979. 





\bibitem[14]{Yu00} G. Yu, The coarse Baum-Connes conjecture for spaces which admit a uniform embedding
into Hilbert space. Invent. Math., 139(1):201-240, 2000.

\bibitem[15]{Yu05} G. Yu, Hyperbolic groups admit proper affine isometric action on $l^p$-spaces. Geometric
and Functional Analysis, Vol. 15, 5 (2005) 1114-1151.

\end{thebibliography}
\end{document}